\newtheorem{defn}{Definition}[section]
\newtheorem{prop}[defn]{Proposition}
\newtheorem{cor}[defn]{Corollary}
\theoremstyle{definition}
\newtheorem{rem}[defn]{Remark}
\def\x{\relax\ifmmode {\mbox{*}}\else*\fi}
\newcommand{\bedefi}{\begin{defn}$\!\!${\bf }$\;$\rm }
\newcommand{\findefi}{\end{defn}}
\newcommand{\mc}{\mathcal}
\newcommand{\M}{\mc M}
\newcommand{\D}{\mc D}
\def\H{\mc H}
\newcommand{\LDD}{{\mc L}(\D,\D^\times)}
\newcommand{\ip}[2]{\left\langle {#1}|{#2}\right\rangle}
\begin{document}
\title[Riesz-Fischer maps in rigged Hilbert spaces]{Lower bounds for Riesz-Fischer maps in rigged Hilbert spaces}%

\author{Francesco Tschinke}%
\address{Dipartimento di Matematica e Informatica, Universit\`a degli Studi
di Palermo, I-90123 Palermo (Italy)} \email{francesco.tschinke@unipa.it}
\begin{abstract}
\noindent
This note concerns a further study about Riesz-Fischer maps, already introduced by the author in a recent work, that is a notion
 that extends to the spaces of distributions the sequences that are known as Riesz-Fischer sequences.
In particular it is proved a characterizing inequality that have as consequence the existence of the continuous inverse of the synthesis operator.
\smallskip
\\
\noindent{Keywords:}  distributions,  rigged Hilbert spaces, frames, Riesz-Fischer sequences
\end{abstract}
%\subjclass[2010]{Primary 47A70; Secondary 42C15, 42C30}
\maketitle
\section{Introduction}
As known, given an element $f\in\H$ and a sequence of elements $\{f_n\}_{n\in\mathbb N}$ in a Hilbert space $\H$ endowed of the inner product $\ip{\cdot}{\cdot}$,  the sequence $a_n:\mathbb N\rightarrow\mathbb C$, $a_n:=\ip{f}{f_n}$ is called {\it moment sequence} -briefly \textit{moments}- \textit{of} $f\in \H$. The problem to find a solution $f\in\H$ of the equations:
$$
\ip{f}{f_n}=a_n,\quad n\in\mathbb N
$$
given   $\{f_n\}_{n\in\mathbb N}$ and $\{a_n\}_{n\in\mathbb N}\subset\mathbb C$, is known as \textit{ moment problem}.

In particular, the sequence $\{f_n\}_{n\in\mathbb N}$ is called \textit{Riesz-Fischer sequence} if, for every $\{a_n\}_{n\in\mathbb N}\in l^2$ (i.e. such that $\sum_1^\infty|a_n|^2<\infty$), there exists a solution $f$ of the moment problem.

On the other hand, the sequence $\{f_n\}_{n\in\mathbb N}$ is called \textit{Bessel sequence} if, for all $f\in\H$, one has $\ip{f}{f_n}_{n\in\mathbb N}\in l^2$.

 We have the well-known characterization results \cite[Th. 3, Sec. 2]{young}:
 \begin{itemize}
 \item
 $\{f_n\}_{n\in\mathbb N}$ is a Riesz-Fischer sequence if, and only if, there exists $A>0$ such that:
\begin{equation}
\label{cararf}
A\sum_{n=1}^k |c_n|^2\leq \sum_{n=1}^k\|c_n f_n\|^2
\end{equation}
for all finite scalar sequences $\{c_n\}\subset\mathbb C$;
 \item
 $\{f_n\}_{n\in\mathbb N}$ is a Bessel sequence if, and only if, there exists $B>0$ such that:
\begin{equation}
\label{carabess}
\sum_{n=1}^k\|c_n f_n\|^2\leq B\sum_{n=1}^k|c_n|^2
\end{equation}
for all finite scalar sequences $\{c_n\}\subset\mathbb C$.
 \end{itemize}
Bessel and Riesz-Fischer sequences plays an important role   in the theory of frames and, in particular, in the study of Riesz bases \cite{young, christensen2}. Roughly speaking, a frame is an extension of a basis in a Hilbert space, in the sense that every vector of $\H$ can be decomposed in terms of elements of a frame, but this decomposition is not in unique.  This "loss of constraints" or "more leeway"  allows several applications in many branches of mathematical sciences and technology.

More precisely, a sequence $\{f_n\}_{n\in\mathbb N}$ in $\H$ is a \textit{frame} if there exists $A,B>0$ such that:
$$
A\|f\|^2\leq \sum_{n\in\mathbb N}|\ip{f}{f_n}|^2\leq B\|f\|^2, \forall f\in\H .
$$
A frame   $\{f_n\}_{n\in\mathbb N}$ that is also a basis for $\H$, is called \textit{Riesz basis}. Furthermore, Bessel, Riesz-Fischer sequences and Riesz basis are related via linear operators to orthonormal basis  (see \cite{young, christensen2, balastoeva}). Furthermore, if  $\{f_n\}_{n\in\mathbb N}$ is complete  or total (i.e. the set of its linear span is dense in $\H$), then it is a Riesz basis if, and only if, it is both Bessel and Riesz-Fischer sequence \cite{young}.

If it is not diversely specified, a frame is intended as discrete. However, a notion of \textit{continuous frame} have been introduced by \cite{keiser} and \cite{AAG_book, AAG_paper}, the last in order to  study coherent states. Instead of a sequence, it is considered a map $F$ from a measure space $(X,\mu)$ ($\mu$ is a positive measure) and a Hilbert space $\H$, i.e.: $F: X\rightarrow \H$, $F: x\mapsto F_x$. This map is called continuous frame with respect to $(X,\mu)$ if:
\begin{itemize}
	\item
	$F$ is weakly measurable, i.e. that is $x\rightarrow \ip{f}{F_x}$ is $\mu$-measurable for every $f\in \H$;
	\item
	there exists $A, B>0$ such that:
	$$
	A\|f\|^2\leq \int_X|\ip{f}{F_x}|^2d\mu\leq B\|f\|^2, \quad\forall f\in\H .
	$$
\end{itemize}
%In particular, in \cite{rahimi},   it is considered a ``moment problem'', i.e.   ``Riesz-Fischer functions'' in the set-up of continuous frame.

In \cite{TTT}, C. Trapani, S. Triolo  and the author have extended  the notion of frames, and related topics as Bessel, bases, Riesz basis, etc., to the spaces of distributions; in \cite{FT} and \cite{corsotsch} a further study  has be done respectively for Riesz-Fischer sequences and multipliers.

An appropriate framework  for the spaces of distributions is given by the \textit{rigged Hilbert space} i.e. is a triple $\D\subset\H\subset \D^\times$  where $\D$ is a locally convex space, $\D^\times$ the conjugate dual, and where the inclusions have to be intended as continuous and dense embedding.
They have been introduced by Gel'fand  in \cite{gelf3, gelf}   with the aim to define the \textit{generalized eigenvectors} of a essentially self-adjoint operator on $\D$ and to prove the  theorem known as Gel'fan-Maurin theorem, on the existence of a complete system of generalized eigenvector (see also \cite{gould}). For that   it is called also \textit{Gel'fand triple}, also denoted by $(\D,\H,\D)$.

However \textit{Gel'fand triple} plays a relevant role also in other branches of mathematics, such as  Gabor Analysis: see for example \cite{cordero,feich2,feich 3}. But you can find in the site www.nuhag.eu/talks  more exhaustive references, papers and talks.

 Reconsidering Riesz-Fischer maps  introduced in \cite{FT}, the aim of this paper is to continue the study, proving characterizing conditions,  analogously as for the Riesz-Fischer sequences that are characterized by the inequality (\ref{cararf}). The paper is organized as follows. In Section 2  are recalled some preliminaries, definitions and previous results. In Section 3 are proved some characterizations of Riesz-Fischer maps  in term of lower bounds properties.
\section{Preliminary definitions and facts}
As usual, let us denote as $\H$ a Hilbert space, $\ip{\cdot}{\cdot}$ is the inner product and $\|\cdot\|$ the Hilbert norm.
Let $\D$ be a  dense subspace of $\H$ endowed with a locally convex topology $t$ stronger than the topology induced by the Hilbert norm. The embedding of $\D$ in $\H$ is continuous and dense, and it is denoted by $\D\hookrightarrow\H$. The space of conjugate linear continuous forms on $\D$ is called  \textit{conjugate dual of} $\D$ and it is denoted by $\D^\times$.
Unless otherwise stated, the value of $F\in\D^\times$ on $f\in\D$ is denoted by $\ip{f}{F}$. The space $\D^\times$ is endowed with the \textit{strong dual topology} $t^\times=\beta(\D^\times,\D)$ defined by the set of seminorms:
\begin{equation}\label{semin_Dtimes}
p_\M(F)=\sup_{g\in \M}|\ip{g}{F}|, \quad F\in \D^\times,
\end{equation}
where $\M$ is a bounded subset of $\D[t]$. In this way, the Hilbert space $\H$ can be continuously embedded as subspace of $\D^\times$ (see \cite{horvath}). If $\D$ is reflexive, i.e. $\D^{\times\times}=\D$, the embedding is dense. We obtain the Gel'fand triple:
\begin{equation}\label{eq_one_intr}
\D[t] \hookrightarrow  \H \hookrightarrow\D^\times[t^\times],
\end{equation}
where $\hookrightarrow$ denote a continuous and dense embedding.
The sesquilinear form $\ip{\cdot}{\cdot}$ that put  $\D$ and $\D^\times$ in duality  is an extension of the inner product of
$\H$ and the notation is the same. We put: $\ip{F}{f}:=\overline{\ip{f}{F}}$.

Through the paper, $(X,\mu)$ a measure space, where $\mu$ is a $\sigma$-finite positive measure. We write $L^1(X,\mu), L^2(X,\mu),\cdots$ the usual spaces of measurable functions. In the case $X=\mathbb R$,  and $\mu$ is the Lebesgue measure, we denote them as $L^p(\mathbb R)$. Furthermore,  $\mathcal S$ stands as the \textit{Schwartz space}, i.e. the space of infinitely differentiable and rapidly decreasing functions on $\mathbb R$. The conjugate dual of $\mathcal S$,  denoted by $\mathcal S^\times$,  is known as the space of \textit{tempered distributions} (see  \cite{reed1} for  more accurate definitions). An usual example of rigged Hilbert space is given by:
$$
\mathcal S\hookrightarrow L^2(\mathbb R)\hookrightarrow\mathcal S^\times.
$$
The vector space of all continuous linear maps from $\D[t]$ into  $\D^\times[t^\times]$ is denoted by $\LDD$. {If $\D[t]$ is barreled (e.g.~reflexive)},   can be introduced an involution in $\LDD$,  $X \mapsto X^\dag$, by:
\begin{equation}
\label{eq: X dag}\ip{X^\dag \eta}{ \xi} = \overline{\ip{X\xi}{\eta}}, \quad \forall \xi, \eta \in \D.
\end{equation}
Hence, in this case, $\LDD$ is a $^\dagger$-invariant vector space.\\
%[\`e il caso di richiamare la quasi *-algebra?]

In this paper we consider maps with values in a distribution spaces,  defined in \cite{TTT} as \textit{weakly measurable maps}, and here denoted by $\omega$. The definition extends the notion, previously recalled  in the introduction, of weakly measurable functions considered for continuous frame (see \cite{AAG_paper}).
\bedefi
The correspondence $\omega:X\rightarrow \D^\times$, $x\mapsto \omega_x$ is called \textit{weakly measurable map} if the complex valued function $x\mapsto\ip{f}{\omega_x}\in\mathbb C$ is $\mu$-measurable for all $f\in\D$.
%Unless otherwise stated, weakly measurable maps are denoted by a Greek letter: $\omega, \xi, \zeta, \cdots$
\findefi
In particular, the notions of completeness and independence of sequences in the Hilbert space is extended to the spaces of distributions by the following:
\bedefi
\label{tandg}
 Let $\omega: x\in X\to \omega_x \in \D^\times$ be a weakly measurable map, then:
\begin{itemize}
\item[i)] $\omega$ is \textit{total} or {\it complete} if, $f \in \D $ and $\ip{f}{\omega_x}=0$  $\mu$-a.e. $x \in X$ implies $f=0$;
\item[ii)]$\omega$ is \textit{$\mu$-independent} if the unique measurable function $\xi:X\rightarrow \mathbb C$ such that  $\int_X \xi(x)\ip{g}{\omega_x} d\mu=0$ for every $g \in \D$, is $\xi(x)=0$ $\mu$-a.e.
\end{itemize}
\findefi
Let us recall the notion of \textit{Bessel map}:
\bedefi
A weakly measurable map $\omega$ is a {\em Bessel distribution map} (briefly: Bessel map) if for every $f \in \D$,
$ \int_X |\ip{f}{\omega_x}|^2d\mu<\infty$.
\findefi
As a consequence of the closed graph theorem, if $\D$ is a Fr\`echet space, for the Bessel maps one has the following characterization result:
\begin{prop}[{\cite[Proposition 3.1]{TTT}}]\label{prop2} If $\D[t]$ a Fr\`{e}chet space, and $\omega: x\in X \to \omega_x\in \D^\times$ a weakly measurable map. The following statements are equivalent.
\begin{itemize}
\item[(i)]
$\omega$ is a  Bessel  map;
\item[(ii)]there exists a continuous seminorm $p$ on $\D[t]$ such that:
\begin{equation*}\label{eqn_bessel1}\left( \int_X |\ip{f}{\omega_x}|^2d\mu\right)^{1/2}\leq p(f), \quad \forall f \in \D.\end{equation*}
 \item[(iii)] for every bounded subset $\mathcal M$ of $\D$ there exists $C_{\mathcal M}>0$ such that:
\begin{equation*}\label{eqn_bessel2}
%\label{disbessel}
\sup_{f\in\mathcal M}{\Bigr |}\int_X\xi(x)\ip{\omega_x}{f}d\mu{\Bigl |}\leq C_{\mathcal M}\|\xi\|_2, \quad \forall \xi\in L^2(X,\mu).
\end{equation*}
  \end{itemize}
\end{prop}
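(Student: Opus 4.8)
The plan is to encode the three conditions in the single linear \emph{analysis map} $T\colon\D\to L^2(X,\mu)$ given by $(Tf)(x)=\ip{f}{\omega_x}$, and to recover each statement from the continuity of $T$. Note that $T$ is well defined exactly when $\omega$ is a Bessel map, so that (i) is precisely the assertion $T(\D)\subseteq L^2(X,\mu)$, while $\|Tf\|_2^2=\int_X|\ip{f}{\omega_x}|^2d\mu$.

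For (i)$\Rightarrow$(ii) I would appeal to the closed graph theorem, whose hypotheses are met because $\D[t]$ is Fr\`echet and $L^2(X,\mu)$ is Banach. The crux is to show that $T$ has closed graph: assume $f_n\to f$ in $\D[t]$ and $Tf_n\to g$ in $L^2(X,\mu)$. Since each $\omega_x$ belongs to $\D^\times$ and is therefore continuous on $\D$, one has $(Tf_n)(x)=\ip{f_n}{\omega_x}\to\ip{f}{\omega_x}=(Tf)(x)$ for every $x\in X$; at the same time $L^2$-convergence furnishes a subsequence $(Tf_{n_k})$ converging to $g$ $\mu$-a.e. Comparing the two limits gives $g=Tf$ $\mu$-a.e., so the graph is closed and $T$ is continuous. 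Continuity of a linear map into a normed space is equivalent to the existence of a continuous seminorm $p$ on $\D[t]$ with $\|Tf\|_2\le p(f)$, which is (ii); the converse (ii)$\Rightarrow$(i) is immediate, since then the integral is $\le p(f)^2<\infty$.

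It remains to connect (iii). For (ii)$\Rightarrow$(iii), write $\ip{\omega_x}{f}=\overline{(Tf)(x)}$, so that the integral in (iii) is the $L^2(X,\mu)$ pairing of $\xi$ with $Tf$; Cauchy--Schwarz and (ii) give $\bigl|\int_X\xi(x)\ip{\omega_x}{f}d\mu\bigr|\le\|\xi\|_2\|Tf\|_2\le\|\xi\|_2\,p(f)$, and since a continuous seminorm is bounded on any bounded set one may take $C_\M=\sup_{f\in\M}p(f)<\infty$. To close the cycle with (iii)$\Rightarrow$(i), apply (iii) to the bounded singleton $\M=\{f\}$: setting $g(x)=\ip{\omega_x}{f}$ and using $\sigma$-finiteness to pick an increasing exhaustion $E_N\uparrow X$ by finite-measure sets on which $g$ is bounded, the choices $\xi_N=\overline g\,\mathbf 1_{E_N}\in L^2(X,\mu)$ yield $\int_{E_N}|g|^2d\mu\le C_{\{f\}}\bigl(\int_{E_N}|g|^2d\mu\bigr)^{1/2}$, hence $\int_{E_N}|g|^2d\mu\le C_{\{f\}}^2$; letting $N\to\infty$ by monotone convergence produces $\int_X|\ip{f}{\omega_x}|^2d\mu<\infty$, i.e.\ (i).

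The main obstacle is the closed-graph step in (i)$\Rightarrow$(ii): the whole argument rests on reconciling the pointwise limit, which is available only because every $\omega_x$ is a genuine element of $\D^\times$, with the almost-everywhere limit extracted along a subsequence from $L^2$-convergence. A secondary technical point, which also underlies the alternative route (iii)$\Rightarrow$(ii) through the fact that on a metrizable space a linear map is continuous as soon as it is bounded, is that (iii) must first be turned into the information $Tf\in L^2(X,\mu)$; this is exactly what the truncation argument above provides.
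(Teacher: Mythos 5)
Your proof is correct and follows essentially the same route the paper indicates: the statement is quoted from \cite{TTT} precisely as ``a consequence of the closed graph theorem'' when $\D[t]$ is Fr\`echet, and your cycle --- closed graph (with the pointwise limit via continuity of each $\omega_x\in\D^\times$ reconciled against the a.e.\ limit of an $L^2$-convergent subsequence) for (i)$\Rightarrow$(ii), Cauchy--Schwarz plus boundedness of continuous seminorms on bounded sets for (ii)$\Rightarrow$(iii), and the $\sigma$-finite truncation $\xi_N=\overline{g}\,\mathbf{1}_{E_N}$ for (iii)$\Rightarrow$(i) --- is exactly the standard argument behind that citation. No gaps; in particular your truncation step correctly handles the a priori possibility that $\ip{\,\cdot\,}{\omega_x}f\notin L^2(X,\mu)$, which is the only delicate point in closing the cycle.
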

The previous proposition has the following consequences \cite{TTT}:
\begin{itemize}
\item
If $\xi\in L^2(X,\mu)$, then the  conjugate linear functional $ {\Lambda^\xi_\omega}$ on $\D$  defined by:
\begin{equation}
\label{functlambda}
\ip{f} {\Lambda^\xi_\omega}:=\int_X\xi(x)\ip{f}{\omega_x}d\mu,\quad\forall f\in\D
\end{equation}
is defined and continuous, i.e. $\Lambda_\omega^\xi\in \D^\times[t^\times]$;
\item
the {\em synthesis operator}  $D_\omega:L^2(X, \mu)\to \D^\times[t^\times]$  defined by
$ D_\omega: \xi \mapsto {\Lambda^\xi_\omega}$
 is continuous;
\item
the {\em analysis operator} $C_\omega: \D[t]\to L^2(X, \mu)$ defined by
$(C_\omega f)(x) =\ip{f}{\omega_x}$
is continuous;
\item
the {\it frame operator} $S_\omega:\D\rightarrow\D^\times$, $S_\omega:=D_\omega C_\omega$ is continuous, i.e. $S_\omega\in\LDD$.
\end{itemize}
\begin{rem}
If $\D$ is a Fr\`echet space, a Bessel map $\omega$ is upper bounded by a continuous seminorm on $\D$, but, in general, is not upper bounded by the Hilbert norm. An example, considered in \cite{FT}, is the system  of derivative of Dirac deltas on $\mathcal S\hookrightarrow L^2(\mathbb R)\hookrightarrow \mathcal S^\times$ denoted by $\{\delta'_x\}_{x\in\mathbb R}$, and defined by: $\ip{f}{\delta'_x}:=-f'(x)$ . Then $\delta'_x$  is a Bessel map but is not upper  bounded by the Hilbert norm. In \cite{TTT} is defined the \textit{bounded Bessel map}, i.e. a Bessel map $\omega$ such that there exists $B>0$ such that: $\int_X|\ip{f}{\omega_x}|^2d\mu\leq B\|f\|^2$ for all $f\in\D$. In particular, if $\omega$ also total and if there exists $B>0$ such that $0<\int_X|\ip{f}{\omega_x}|^2d\mu\leq B\|f\|^2$ forall $f\neq 0$,   then $\omega$ it is called  \textit{distribution upper semiframe} \cite{FT},  as extension to the space of distributions of the corresponding notion of \textit{continuous upper semiframe}  introduced in \cite{semifr1}.
\end{rem}
If a bounded Bessel map $\omega$ is also bounded from below by the Hilbert norm, we have the definition of \textit{distribution frame}:
\bedefi\cite[Definition 3.6]{TTT} \label{defn_distribframe}  Let $\D[t] \hookrightarrow\H\hookrightarrow\D^\times[t^\times]$ be a rigged Hilbert space, with $\D[t]$ a reflexive space and $\omega$ a Bessel map.
We say that $\omega$ is a {\em  distribution frame} if there exists $A,B>0$ such that:
\begin{equation*}
\label{eqn_frame_main1}
A\|f\|^2 \leq \int_X|\ip{f}{\omega_x}|^2d\mu \leq B \|f\|^2, \quad \forall f\in \D.
\end{equation*}
We have that (see \cite{TTT} for details): $\Lambda_\omega^\xi$ is bounded in $(\D,\|\cdot\|)$ and the bounded extension to $\H$ is denoted by $\widetilde{\Lambda}_\omega^\xi$; the synthesis operator $D_\omega$ has range in $\H$ and it is bounded; the Hilbert adjoint $D^*_\omega$ extends $C_\omega$ to $\H$; the  operator $\tilde{S}_\omega=D_\omega D_\omega^*$ is bounded and  extends the frame operator $S_\omega$.
\findefi
If $\omega$ is a distribution frame, then the frame operator $\hat{S}_\omega$  enjoys  the inequality
$$ A\|f\| \leq \| \hat{S}_\omega f\| \leq B\|f\|,\quad \forall f\in \H. $$
Since  $\hat{S}_\omega$ is symmetric, this  implies that $\hat{S}_\omega$ has a bounded inverse $\hat{S}_\omega^{-1}$ everywhere defined in $\H$.

In \cite{FT} are defined the Riesz-Fischer maps in the space of distributions. They are the analogous of the corresponding sequences in Hilbert space and where an extension to the continuous case is given in \cite{rahimi}.
\bedefi\cite[Definition 3.4]{FT}
Let $\D[t]$ be a locally convex space. A weakly measurable map $\omega: x\in X \mapsto \omega_x\in \D^\times$ is called a {\em Riesz-Fischer distribution map} (briefly: Riesz-Fischer map)  if,
for every $h\in L^2({X,\mu})$,
there exists $f \in \D$ such that:
\begin{equation}
\label{rf}
\ip{f}{\omega_x}=h(x)\quad \mbox{$\mu$-a.e.}
\end{equation}
In this case, we say  that $f$ is a solution of equation $\ip{f}{\omega_x}=h(x)$.
\findefi
Clearly, if $f_1$ and $f_2$ are solutions of (\ref{rf}), then $f_1-f_2\in\omega^\bot:=\{g\in \D: \ip{g}{\omega_x}=0, \quad \mu-a.e.\}$. If $\omega$ is total, the solution is unique.

The analysis operator $C_\omega$ is defined on $dom(C_\omega):=\{f\in\D: \ip{f}{\omega_x}\in L^2(X,\mu)\}$ as $C_\omega: f\in dom(C_\omega)\mapsto\ip{f}{\omega_x}\in L^2(X,\mu)$. Clearly,  $\omega$ is a Riesz-Fischer map if and only if $C_\omega: dom(C_\omega)\rightarrow L^2(X,\mu)$ is surjective. If $\omega$ is total, it is injective too, so, in this case, $C_\omega$ is invertible.

To define the synthesis operator $D_\omega$ we consider the following subset of $L^2(X,\mu)$:
$$dom(D_\omega):=\{\xi\in L^2(X,\mu), \mbox{s.t.} \, \int_X\xi(x){\omega_x}d\mu\, \mbox{is convergent in}\,\, \D^\times\}.$$
As \textit{convergent in} $\D^\times$ we mean: $\int_X\xi(x)\ip{f}{\omega_x}d\mu$ is convergent for all $f\in\D$ and the conjugate functional on $\D$ defined in (\ref{functlambda}) by  $\Lambda_\omega^\xi$, is continuous, so $\Lambda_\omega^\xi\in\D^\times$.
Then the synthesis operator  $D_\omega: dom(D_\omega)\rightarrow  \D^\times$ is defined by:
$$D_\omega: \xi\mapsto \Lambda_\omega^\xi:=\int_X\xi(x)\omega_xd\mu.$$
The range of $D_\omega$ is denoted by $Ran (D_\omega)$:
$$
Ran (D_\omega):=\Bigl{\{ }F\in\D^\times: \exists\, \xi\in dom(D_\omega):\, \forall f\in\D,\, \ip{f}{F}:=\int_X\xi(x)\ip{f}{\omega_x}d\mu\Bigr{\}}.
$$
If $\D$ is a Fr\`echet space, as a consequence of the closed graph theorem one has, for a total Riesz-Fischer map  $\omega$,  the following inequality holds:
\begin{cor}\cite[Corollary 3.7]{FT}
\label{lbound}
Assume that $\D[t]$ is a Fr\`echet space. If the map $\omega: x\in X \to \omega_x\in \D^\times$ is a total Riesz-Fischer map, then for every continuous seminorm $p$ on $\D$, there exists a constant $C>0$ such that, for the solution $f$ of \eqref{rf}:
$$
{p}({f})\leq C\|\ip{f}{\omega_x}\|_2.
$$
\end{cor}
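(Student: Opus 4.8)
The plan is to read the claimed inequality as the \emph{continuity of the inverse analysis operator}. As already observed in the excerpt, since $\omega$ is a total Riesz-Fischer map the analysis operator $C_\omega: dom(C_\omega)\to L^2(X,\mu)$ is a bijection: surjectivity is exactly the Riesz-Fischer property, while injectivity follows from totality (if $C_\omega(f_1-f_2)=0$ then $\ip{f_1-f_2}{\omega_x}=0$ $\mu$-a.e., so $f_1=f_2$). Hence $C_\omega^{-1}:L^2(X,\mu)\to\D[t]$ is a well-defined linear map, defined on all of $L^2(X,\mu)$, and for the solution $f$ of \eqref{rf} associated with a datum $h$ we have $f=C_\omega^{-1}h$ and $h=\ip{f}{\omega_x}$. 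The asserted estimate $p(f)\le C\|\ip{f}{\omega_x}\|_2$, for every continuous seminorm $p$ on $\D$, is then precisely the statement that $C_\omega^{-1}$ is continuous from $L^2(X,\mu)$ into $\D[t]$.

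To obtain this continuity I would invoke the closed graph theorem. The two spaces are in the right category: $L^2(X,\mu)$ is Banach, hence Fr\'echet, and $\D[t]$ is Fr\'echet by hypothesis; so it suffices to verify that the graph of $C_\omega^{-1}$ is closed. Concretely, I would take a sequence $h_n\to h$ in $L^2(X,\mu)$ with $f_n:=C_\omega^{-1}h_n\to f$ in $\D[t]$, and show that necessarily $f=C_\omega^{-1}h$, i.e.\ that $\ip{f}{\omega_x}=h(x)$ $\mu$-a.e.

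The crux of the argument, and the step I expect to be the main obstacle, is the passage to the limit in the defining relation $\ip{f_n}{\omega_x}=h_n(x)$, where two distinct modes of convergence must be reconciled. On one hand, for each fixed $x$ the functional $g\mapsto\ip{g}{\omega_x}$ is continuous on $\D[t]$ (because $\omega_x\in\D^\times$), so $f_n\to f$ in $\D[t]$ forces the \emph{pointwise} convergence $\ip{f_n}{\omega_x}\to\ip{f}{\omega_x}$ for every $x\in X$. On the other hand, $h_n\to h$ in $L^2(X,\mu)$ only yields a subsequence $h_{n_k}$ with $h_{n_k}\to h$ $\mu$-a.e. Matching the two along that subsequence gives $\ip{f}{\omega_x}=\lim_k\ip{f_{n_k}}{\omega_x}=\lim_k h_{n_k}(x)=h(x)$ for $\mu$-a.e.\ $x$. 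In particular $\ip{f}{\omega_x}=h\in L^2(X,\mu)$, so $f\in dom(C_\omega)$ with $C_\omega f=h$; totality then gives $f=C_\omega^{-1}h$, closing the graph. The delicate point here, beyond bookkeeping, is exactly this a.e.\ identification: one must ensure that the \emph{pointwise} limit of $\ip{f_n}{\omega_x}$ coincides $\mu$-a.e.\ with the $L^2$-limit $h$, which is what the subsequence extraction secures.

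With the graph closed, the closed graph theorem yields the continuity of $C_\omega^{-1}:L^2(X,\mu)\to\D[t]$. Unwinding continuity into the locally convex topology $t$ means precisely that for every continuous seminorm $p$ on $\D$ there is a constant $C>0$ with $p(C_\omega^{-1}h)\le C\|h\|_2$ for all $h\in L^2(X,\mu)$. Specialising to $h=\ip{f}{\omega_x}$ with $f=C_\omega^{-1}h$ then gives the claimed bound $p(f)\le C\|\ip{f}{\omega_x}\|_2$, as desired.
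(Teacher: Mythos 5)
Your proposal is correct and follows essentially the same route as the paper, which explicitly derives the estimate ``as a consequence of the closed graph theorem'': totality and the Riesz-Fischer property make the solution operator $C_\omega^{-1}:L^2(X,\mu)\to\D[t]$ an everywhere-defined linear map between Fr\'echet spaces, the graph is closed via the a.e.-convergent subsequence argument you give, and continuity unwinds into the seminorm inequality. Your write-up simply makes explicit the standard details (injectivity from totality, the subsequence extraction reconciling $L^2$ and pointwise convergence) that the paper leaves to the cited reference \cite{FT}.
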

It follows that, if $\omega$ a total Riesz-Fischer map, the inverse of the analysis operator $C_\omega^{-1}:L^2(X,\mu)\rightarrow\, dom(C_\omega)$ is continuous.
\section{Main results}
In this section are proved some characterization properties of Riesz-Fischer maps.  We have the following:
\begin{prop}
\label{propdisrf}
	Let $(X,\mu)$ be a measure space, $h(x)\in L^2(X,\mu)$  and $\omega: X\ni x\mapsto\omega_x\in\D^\times$ a weakly measurable map. Then $\omega$ is a Riesz-Fischer map if, and only if, there exists a bounded subset $\mathcal M\subset\D$ such that:
	\begin{equation}
	\label{disrf}
	{\Bigl |}\int_X\xi(x)\overline{h(x)}d\mu{\Bigl |}\leq\sup_{f\in\mathcal M}{\Bigl |}\int_X\xi(x)\ip{\omega_x}{f}d\mu{\Bigl |}
	\end{equation}
	for all $\xi(x)\in L^2(X,\mu)$ such that  $\int_X\xi(x){\omega_x}d\mu$ is convergent in $\D^\times$.
\end{prop}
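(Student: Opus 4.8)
The statement is best read for a fixed $h$: the ``only if'' direction says that if $\omega$ is Riesz--Fischer then for the given $h$ a suitable $\mathcal M$ exists, while the ``if'' direction produces, from such an $\mathcal M$, a solution $f$ of (\ref{rf}) for that $h$; letting $h$ range over $L^2(X,\mu)$ then yields the equivalence with the Riesz--Fischer property. Throughout I would use the defining identity $\ip{f}{\Lambda_\omega^\xi}=\int_X\xi(x)\ip{f}{\omega_x}\,d\mu$ of (\ref{functlambda}), the convention $\ip{F}{f}=\overline{\ip{f}{F}}$, and the standing reflexivity $\D^{\times\times}=\D$.

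The ``only if'' part is immediate. If $\omega$ is Riesz--Fischer, fix a solution $f_0\in\D$ of $\ip{f_0}{\omega_x}=h(x)$ $\mu$-a.e. and take $\mathcal M=\{f_0\}$, which is bounded in $\D$. Since $\ip{\omega_x}{f_0}=\overline{\ip{f_0}{\omega_x}}=\overline{h(x)}$ a.e., for every admissible $\xi\in dom(D_\omega)$ the right-hand side of (\ref{disrf}) equals $\left|\int_X\xi(x)\overline{h(x)}\,d\mu\right|$, so (\ref{disrf}) holds, in fact with equality (the integrals converging by Cauchy--Schwarz, as $\xi,h\in L^2(X,\mu)$).

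The ``if'' part is the substantive one and mirrors the classical proof of (\ref{cararf}). First I would rewrite the right-hand side of (\ref{disrf}) as a strong-dual seminorm: using the identity above one checks $\int_X\xi(x)\ip{\omega_x}{f}\,d\mu=\ip{D_\omega\overline{\xi}}{f}$, whence $\sup_{f\in\mathcal M}\left|\int_X\xi(x)\ip{\omega_x}{f}\,d\mu\right|=p_{\mathcal M}(D_\omega\overline{\xi})$, with $p_{\mathcal M}$ the seminorm (\ref{semin_Dtimes}) defining $t^\times$ (the passage between $\xi$ and $\overline\xi$ on $dom(D_\omega)$ being routine conjugation bookkeeping). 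Thus (\ref{disrf}) says precisely that the conjugate-linear functional $G=D_\omega\overline{\xi}\mapsto\int_X\xi(x)\overline{h(x)}\,d\mu$ is well defined on $Ran(D_\omega)\subseteq\D^\times$ --- well-definedness holds because $D_\omega\overline{\xi}=0$ forces $p_{\mathcal M}(D_\omega\overline{\xi})=0$ and hence $\int_X\xi\overline h\,d\mu=0$ by (\ref{disrf}) --- and is dominated by the single continuous seminorm $p_{\mathcal M}$, so it is $t^\times$-continuous on $Ran(D_\omega)$.

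By the Hahn--Banach theorem this functional extends to a $t^\times$-continuous conjugate-linear form on all of $\D^\times[t^\times]$, i.e. to an element of $\D^{\times\times}$, which reflexivity identifies with some $f_0\in\D$; evaluating on $G=D_\omega\overline{\xi}$ then gives $\int_X\xi(x)\big(\ip{f_0}{\omega_x}-\overline{h(x)}\big)\,d\mu=0$ for all admissible $\xi$. The main obstacle is to upgrade this integral identity to the pointwise equality $\ip{f_0}{\omega_x}=h(x)$ $\mu$-a.e. demanded by (\ref{rf}): this requires $f_0\in dom(C_\omega)$, so that $\ip{f_0}{\omega_\cdot}-h\in L^2(X,\mu)$, and that the admissible test functions $dom(D_\omega)$ be total in $L^2(X,\mu)$, so that orthogonality to all of them forces the difference to vanish. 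I would treat these two points --- the membership $f_0\in dom(C_\omega)$ and the totality of $dom(D_\omega)$ --- as the crux; once they are secured, $f_0$ solves (\ref{rf}) for the given $h$, and since $h\in L^2(X,\mu)$ was arbitrary, $\omega$ is a Riesz--Fischer map.
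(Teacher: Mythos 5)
Your proposal is correct and follows essentially the same route as the paper: necessity with $\mathcal M=\{f_0\}$, and sufficiency by defining a functional on $\mathcal E=Ran(D_\omega)$ via $\nu\bigl(\int_X\xi(x)\omega_x\,d\mu\bigr)=\int_X\xi(x)\overline{h(x)}\,d\mu$, checking well-definedness from \eqref{disrf}, dominating it by the strong-dual seminorm $p_{\mathcal M}$, extending by Hahn--Banach, and using reflexivity of $\D$ to realize the extension as some $f_0\in\D$. The one place you diverge is instructive: the ``crux'' you leave open --- passing from $\int_X\xi(x)\bigl[\ip{\omega_x}{f_0}-\overline{h(x)}\bigr]d\mu=0$ for admissible $\xi$ to the $\mu$-a.e.\ equality, which needs $f_0\in dom(C_\omega)$ and totality of $dom(D_\omega)$ in $L^2(X,\mu)$ --- is exactly the step the paper asserts without argument (it even writes ``$\forall\,\xi\in L^2(X,\mu)$'' although the identity is only established for $\xi\in dom(D_\omega)$), so your flagged reservation is not a missing idea relative to the paper but an honest isolation of an elision in the paper's own proof.
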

\begin{proof}
	Necessity is obvious: let $\bar{f}$ be a solution of (\ref{rf}), then, for all $\xi(x)$, one has:
	$$
	{\Bigl |}\int_{X}\xi(x)\overline{h(x)}d\mu{\Bigl |}={\Bigl |}\int_{X}\xi(x)\ip{\omega_x}{\bar{f}}d\mu{\Bigl |}\leq\sup_{f\in\mathcal M}{\Bigl |}\int_{X}\xi(x)\ip{\omega_x}{f}d\mu{\Bigl |}.
	$$
	Sufficiency: let us consider the subspace $\mathcal E\subset\D^\times$  defined by the set of $F\in\D^\times$ such that there exist $\xi\in  L^2(X,\mu)$:
	$F=\int_X\xi(x)\omega_xd\mu$, and let us define the linear functional $\nu$ on $\mathcal E$ by:
	$$
	\nu(F)=\nu{\Bigl (}\int_{X}\xi(x)\omega_xd\mu{\Bigr )}:=\int_{X}\xi(x)\overline{h(x)}d\mu.
	$$
It follows immediately from hypothesis that $\nu$ is defined unambiguously.
%	The functional is defined unambiguously by hypothesis (\ref{disrf}): in fact, if there exists another  $\xi'(x)\in L^2(X,\mu)$ such that $F=\int_X\xi(x)\omega_xd\mu=\int\xi'(x)\omega_xd\mu$, one has:
%	$$
	%{\Bigl |}\int_{X}(\xi'(x)-\xi(x))\overline{h(x)}d\mu{\Bigl |}\leq\sup_{f\in\mathcal M}{\Bigl |}\int_{X}(\xi'(x)-\xi(x))\ip{\omega_x}{f}d\mu{\Bigl |}=0.
	%$$
	Furthermore, from (\ref{disrf}) one has:
	$$
	{\bigl |}\nu(F){\bigl |}={\Bigl |}\int_{X}\xi(x)\overline{h(x)}d\mu{\Bigl |}\leq\sup_{f\in\mathcal M}{\Bigl |}\int_{X}\xi(x)\ip{\omega_x}{f}d\mu{\Bigl |}=\sup_{f\in\mathcal M}|\ip{f}{F}|, \forall F\in \mathcal E
	$$
	i.e. $\nu(F)$ is bounded by a seminorm of $\D^\times[t^\times]$. By Hahn-Banach theorem, there exists an extension $\widetilde{\nu}$ of $\nu$ to $\D^\times$. Since $\D$ is reflexive,
	there exists $\bar f\in\D$ such that: $\widetilde{\nu}(F)=\ip{F}{\bar f}$. Since:
	$$
	\int_X\xi(x)[\ip{\omega_x}{\bar f}-\overline{h(x)}]d\mu=\nu(F)-\int_X \xi(x)\overline{h(x)}d\mu= 0,\quad \forall\, \xi\in L^2(X,\mu)
	$$
	then $\ip{\bar f}{\omega_x}=
	h(x)$ $\mu$-a.e.
\end{proof}
As consequence, we have the following:
\begin{cor}
\label{corodisRF}
	$\omega$ is a Riesz-Fischer map if, and only if, there exists a bounded subset ${\mathcal M}\subset\D$  such that:
	\begin{equation}
	\|\xi\|_2\leq \sup_{f\in\mathcal M}{\Bigl |}\int_X\xi(x)\ip{f}{\omega_x}d\mu{\Bigl |},
	\label{disRF}
	\end{equation}
	for all $\xi(x)\in L^2(X,\mu)$ such that $\int_X\xi(x){\omega_x}d\mu$ is convergent in $\D^\times$.
\end{cor}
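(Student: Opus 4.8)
The plan is to read Corollary~\ref{corodisRF} as the ``uniform'' version of Proposition~\ref{propdisrf}, the bridge being the elementary duality formula $\|\xi\|_2=\sup_{\|h\|_2\le 1}\left|\int_X\xi(x)\overline{h(x)}\,d\mu\right|$ for the $L^2$-norm. Accordingly I would split the argument into the two implications and invoke Proposition~\ref{propdisrf} (together with, for one direction, the continuity of the inverse analysis operator recorded after Corollary~\ref{lbound}) rather than redo the Hahn--Banach construction from scratch.

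For necessity, assume $\omega$ is a Riesz-Fischer map. The key point is to produce a \emph{single} bounded $\mathcal M$ that works for every $\xi$, and I would take it to be the image of the unit ball under the solution map. Since $\omega$ is (total) Riesz-Fischer, the inverse analysis operator $C_\omega^{-1}\colon L^2(X,\mu)\to dom(C_\omega)$ is continuous, so $\mathcal M:=\{C_\omega^{-1}h:\ \|h\|_2\le 1\}$ is bounded in $\D$. For such a solution one has $\ip{C_\omega^{-1}h}{\omega_x}=h(x)$ $\mu$-a.e., whence, for every $\xi\in dom(D_\omega)$,
\[
\sup_{f\in\mathcal M}\left|\int_X\xi(x)\ip{f}{\omega_x}\,d\mu\right|=\sup_{\|h\|_2\le 1}\left|\int_X\xi(x)h(x)\,d\mu\right|=\|\xi\|_2 ,
\]
which is even sharper than \eqref{disRF}. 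Note that this direction produces the pairing $\ip{f}{\omega_x}$ directly, with no conjugation to reconcile.

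For sufficiency, assume \eqref{disRF} holds for some bounded $\mathcal M$ and fix an arbitrary $h\in L^2(X,\mu)$; the goal is to solve \eqref{rf}. By Cauchy--Schwarz and \eqref{disRF}, for every $\xi\in dom(D_\omega)$ one gets $\left|\int_X\xi\overline{h}\,d\mu\right|\le\|h\|_2\|\xi\|_2\le\|h\|_2\sup_{f\in\mathcal M}\left|\int_X\xi\ip{f}{\omega_x}\,d\mu\right|=\sup_{f\in\|h\|_2\mathcal M}\left|\int_X\xi\ip{f}{\omega_x}\,d\mu\right|$, and $\|h\|_2\mathcal M$ is again bounded. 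This is exactly the hypothesis \eqref{disrf} of Proposition~\ref{propdisrf} for this $h$, so $h$ admits a solution; since $h$ was arbitrary, $\omega$ is Riesz-Fischer.

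I expect the two delicate points to be the following. First, Proposition~\ref{propdisrf} is written with the pairing $\ip{\omega_x}{f}$ while \eqref{disRF} uses $\ip{f}{\omega_x}$; passing between them amounts to the substitution $\xi\mapsto\overline{\xi}$, since $\left|\int_X\xi\ip{\omega_x}{f}\,d\mu\right|=\left|\int_X\overline{\xi}\,\ip{f}{\omega_x}\,d\mu\right|$, so I must check that $dom(D_\omega)$ and the $L^2$-norm are stable under conjugation (they are) and feed $\overline{\xi}$ into \eqref{disRF}. Second, and more substantially, the necessity argument rests on the \emph{uniform} bound furnished by the continuity of $C_\omega^{-1}$: Proposition~\ref{propdisrf} alone yields, for each $h$, only an $h$-dependent bounded set $\mathcal M_h$, and the real work is upgrading this to one $\mathcal M$ valid for all unit $h$ simultaneously. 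That upgrade is precisely what Corollary~\ref{lbound} (hence the Fr\`echet and totality hypotheses) provides.
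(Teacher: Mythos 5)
Your proof is correct, and while its sufficiency half coincides with the paper's (the same Cauchy--Schwarz reduction to Proposition \ref{propdisrf}; the paper restricts to $\|h\|_2\le 1$ and leaves the scaling for general $h$ implicit, whereas you absorb $\|h\|_2$ into the bounded set $\|h\|_2\M$, which is cleaner), your necessity half takes a genuinely different and in fact more careful route. The paper merely sets $h=\xi/\|\xi\|_2$ and cites Proposition \ref{propdisrf}; but in that proposition $h$ is fixed, and its necessity proof produces only the singleton $\M=\{\bar f\}$ for a solution $\bar f$ of \eqref{rf}, so read literally the paper's argument yields a bounded set depending on $\xi$, while the statement of Corollary \ref{corodisRF} demands a single $\M$ valid for all admissible $\xi$. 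You identify exactly this uniformity issue and close it by taking $\M$ to be the image of the unit ball of $L^2(X,\mu)$ under $C_\omega^{-1}$, bounded because $C_\omega^{-1}$ is continuous (Corollary \ref{lbound}); this even upgrades \eqref{disRF} to an equality. What your route costs are the hypotheses behind Corollary \ref{lbound} --- $\D$ Fr\`echet and $\omega$ total (without totality there is no single-valued solution map, only solutions modulo $\omega^\bot$, and boundedness of a chosen family of solutions is no longer automatic) --- which Corollary \ref{corodisRF} does not state but which, as you yourself observe, appear to be what a uniform $\M$ actually requires; in this sense your proof makes explicit an assumption the paper's proof glosses over. Your bookkeeping of the conjugation mismatch between $\ip{\omega_x}{f}$ in \eqref{disrf} and $\ip{f}{\omega_x}$ in \eqref{disRF} via the substitution $\xi\mapsto\overline{\xi}$ is likewise a real point that the paper passes over in silence; the only soft spot is your parenthetical claim that the admissible class of $\xi$ is conjugation-stable, which is clear at the level of absolute integrability but not entirely immediate for the continuity requirement in the definition of convergence in $\D^\times$ --- though since the paper itself quantifies \eqref{disrf} and \eqref{disRF} over the same class of $\xi$ while swapping the pairing, your treatment is at least as rigorous as the source on this point.
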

\begin{proof}
	Sufficiency: the  condition (\ref{disRF}) holds. Let us consider $h(x)\in L^2(X,\mu)$ with $\|h(x)\|_2\leq 1$:
	$$
	{\Bigl |}\int_X\xi(x)\overline{h(x)}d\mu{\Bigl |}\leq\|\xi\|_2\leq
	\sup_{f\in\mathcal M}{\Bigl |}\int_X\xi(x)\ip{f}{\omega_x}d\mu{\Bigl |}.
	$$
	Then, for the previous proposition, $\omega$ is a Riesz-Fischer map.\\
	Necessity: since $\omega$ is a Riesz-Fischer map, putting $\frac{\xi(x)}{\|\xi(x)\|_2}=h(x)$, for the previous proposition there exists a bounded subset $\mathcal{M}\subset \D$ such that:
	$$
	\|\xi\|_2=\int_X\xi(x)\frac{\overline{\xi(x)}}{\|\xi(x\|_2}d\mu\leq
	\sup_{f\in\mathcal M}{\Bigl |}\int_X\xi(x)\ip{f}{\omega_x}d\mu{\Bigl |}.
	$$
\end{proof}
The previous Corollary can be rephrased as:
\begin{cor}
  $\omega$ is a   Riesz-Fischer map if, and only if, the synthesis operator $D_\omega$ is invertible and the inverse $D_\omega^{-1}: Ran ({D_\omega})\rightarrow L^2(X,\mu)$ is continuous.
\end{cor}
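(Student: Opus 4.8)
The plan is to read the statement off directly from Corollary~\ref{corodisRF}, by recognizing that the right-hand side of the inequality (\ref{disRF}) is nothing but a seminorm of the strong dual topology $t^\times$ evaluated at $D_\omega\xi$. First I would note that, for any $\xi\in dom(D_\omega)$, putting $F=D_\omega\xi=\Lambda_\omega^\xi$ and using the defining formula (\ref{functlambda}) gives $\int_X\xi(x)\ip{f}{\omega_x}d\mu=\ip{f}{F}$ for every $f\in\D$. Hence, for a bounded set $\mathcal M\subset\D$,
$$
\sup_{f\in\mathcal M}\Bigl|\int_X\xi(x)\ip{f}{\omega_x}d\mu\Bigr|=\sup_{f\in\mathcal M}|\ip{f}{D_\omega\xi}|=p_{\mathcal M}(D_\omega\xi),
$$
with $p_{\mathcal M}$ the seminorm (\ref{semin_Dtimes}). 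Thus (\ref{disRF}) says precisely that there is a bounded $\mathcal M\subset\D$ with $\|\xi\|_2\le p_{\mathcal M}(D_\omega\xi)$ for all $\xi\in dom(D_\omega)$.

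Next I would extract invertibility and continuity from this single inequality. It forces $D_\omega$ to be injective, since $D_\omega\xi=0$ yields $p_{\mathcal M}(D_\omega\xi)=0$, whence $\|\xi\|_2=0$ and $\xi=0$; so $D_\omega$ is a bijection onto $Ran(D_\omega)$ and $D_\omega^{-1}:Ran(D_\omega)\to L^2(X,\mu)$ is well defined. Writing $F=D_\omega\xi$, i.e. $\xi=D_\omega^{-1}F$, the same inequality becomes
$$
\|D_\omega^{-1}F\|_2\le p_{\mathcal M}(F),\qquad \forall F\in Ran(D_\omega),
$$
which is exactly the statement that the linear map $D_\omega^{-1}$ from $Ran(D_\omega)$, with the topology induced by $t^\times$, into the normed space $L^2(X,\mu)$ is continuous, a single continuous seminorm of $t^\times$ dominating $\|D_\omega^{-1}\,\cdot\,\|_2$.

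For the converse direction I would show that continuity of $D_\omega^{-1}$ reproduces such a bound. Continuity into a normed space means the preimage of the open unit ball is a neighbourhood of $0$ in $Ran(D_\omega)$, hence contains a basic one of the form $\{F:p_{\mathcal M}(F)<\varepsilon\}$ for some bounded $\mathcal M\subset\D$, so that $\|D_\omega^{-1}F\|_2\le C\,p_{\mathcal M}(F)$ on $Ran(D_\omega)$; replacing $\mathcal M$ by $C\mathcal M$, still bounded, absorbs the constant and gives $\|\xi\|_2\le p_{\mathcal M}(D_\omega\xi)$ for all $\xi\in dom(D_\omega)$, that is (\ref{disRF}). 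By Corollary~\ref{corodisRF} this is equivalent to $\omega$ being a Riesz-Fischer map, which closes the equivalence.

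The only point needing some care, and the mild obstacle here, is the translation between the seminorm estimate and topological continuity of $D_\omega^{-1}$: one must use that the seminorms $p_{\mathcal M}$ indexed by bounded subsets of $\D[t]$ form a directed defining family for $t^\times$ (finite unions of bounded sets being bounded), so that a single $p_{\mathcal M}$ dominates $\|D_\omega^{-1}\,\cdot\,\|_2$, and that the multiplicative constant is harmless because it can be absorbed by rescaling $\mathcal M$. Everything else is a direct rewriting of Corollary~\ref{corodisRF}.
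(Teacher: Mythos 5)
Your proposal is correct and follows exactly the route the paper intends: the paper gives no separate proof, presenting the corollary as a direct rephrasing of Corollary~\ref{corodisRF}, and your argument is precisely that rephrasing made explicit --- identifying $\sup_{f\in\mathcal M}\bigl|\int_X\xi(x)\ip{f}{\omega_x}d\mu\bigr|$ with the strong-dual seminorm $p_{\mathcal M}(D_\omega\xi)$ of \eqref{semin_Dtimes} and reading off injectivity and the two directions of continuity. Your added care about the directedness of the family $\{p_{\mathcal M}\}$ and absorbing the constant by rescaling $\mathcal M$ correctly fills in the details the paper leaves implicit.
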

\section{Conclusions}
The inequalities in Proposition \ref{eqn_bessel2}(iii)  and  in Corollary \ref{corodisRF}
are an extension to the rigged Hilbert spaces respectively of the inequalities (\ref{carabess}), and  (\ref{cararf}) for sequences in Hilbert spaces
 (see  \cite[Th. 2, Th. 3, Sec. 2]{young}). In the case of sequences, it follows immediately that: if $\{e_n\}_{n\in\mathbb N}$ is an orthonormal basis of $\H$, then
 $\{f_n\}_{n\in\mathbb N}$ is a Bessel sequence if, and only if, there exists a bounded operator $T:\H\rightarrow \H$ such that $f_n=T e_n$; $\{f_n\}_{n\in\mathbb N}$ is a Riesz-Fischer sequence if, and only if, there exists a bounded operator  $V:\H\rightarrow\H$ such that
 $V f_n = e_n$
(for frames and Riesz-bases see also \cite[Proposition 4.6]{balastoeva}). Since in the spaces of distributions  the orthonormality is not defined, a sort of ''orthonormal basis'' is played    by the \textit{Gel'fand basis}: see \cite{TTT} and \cite[Definition 5.3]{FT}. So, it would be appropriate to carry on a further study, started in \cite{TTT}, about the  transformations between Gal'fand basis, Bessel, Riesz-Fischer maps, distribution frames, and Riesz distribution basis.
\section*{Acknowledgments}
 This work has been realized within of the activities of Gruppo UMI Teoria del\-
 l'Approssimazione e Applicazioni and Gruppo Nazionale per l'Analisi Matematica, la Probabilit\`a e le loro Applicazioni  (GNAMPA) of the Istituto Nazionale di Alta Matematica (INdAM).
\bibliographystyle{amsplain}

\end{document}